\newtheorem{theorem}{Theorem}[section]
\newtheorem{prop}[theorem]{Proposition}
\theoremstyle{definition}
\theoremstyle{remark}
\newtheorem{remark}[theorem]{Remark}
\numberwithin{equation}{section}
\let \la=\lambda
\let \e=\varepsilon
\let \f=\varphi
\let \si=\sigma
\begin{document}
\title[Separated bump conditions]
{On separated bump conditions for Calder\'on-Zygmund operators}

\author[A. K. Lerner]{Andrei K. Lerner}

\address[A. K. Lerner]{Department of Mathematics,
Bar-Ilan University, 5290002 Ramat Gan, Israel}
\email{lernera@math.biu.ac.il}

\thanks{The author was supported by ISF grant No. 447/16 and ERC Starting Grant No. 713927.}

\begin{abstract}
We improve  bump conditions for the two-weight boundedness of Calder\'on-Zygmund operators introduced recently by R. Rahm and S. Spencer \cite{RS}.
\end{abstract}

\keywords{Calder\'on-Zygmund operators, sparse operators, bump conditions.}

\subjclass[2010]{42B20, 42B25}

\maketitle

\section{Introduction}
Let $T$ be a Calder\'on-Zygmund operator on ${\mathbb R}^n$. In this note we are concerned with separated bump  conditions on a couple of weights $(w,\si)$ for which
\begin{equation}\label{twocz}
\|T(f\si)\|_{L^p(w)}\lesssim \|f\|_{L^p(\si)}\quad(1<p<\infty).
\end{equation}

Given a cube $Q\subset {\mathbb R}^n$, denote $f_Q=\frac{1}{|Q|}\int_Qf$. It is well known that the standard $A_p$ condition,
$$
[w,\si]_{A_p}=\sup_Qw_Q(\si_Q)^{p-1}<\infty,
$$
is not sufficient for (\ref{twocz}). In fact, the $A_p$ condition is not sufficient even for the maximal operator instead of $T$ \cite{M}.

A number of works were devoted to finding slightly stronger conditions that are sufficient for (\ref{twocz}). Among such conditions one can distinguish joint and separated bump conditions.
By a joint bump condition we generally mean a condition of the form
$$\sup_QB_1[w;Q](B_2[\si;Q])^{p-1}<\infty,$$
where $B_1[w;Q]$ and $B_2[\si;Q]$ are referred to as bumps, that is, expressions slightly larger than $w_Q$ and $\si_Q$, respectively.
For joint bump conditions, see, e.g., \cite{L,Li,NRTV}.

By a separated bump condition one means a more delicate and weaker condition of the form
$$\sup_QB_1[w;Q](\si_Q)^{p-1}<\infty\quad\text{and}\quad \sup_Qw_Q(B_2[\si;Q])^{p-1}<\infty.$$

There are several different ways of ``bumping", see \cite{CRV,La,Li} for the Orlicz bumps and \cite{LS,RS,TV} for the so-called entropy bumps; we recall them below, in Section 5.

In a recent work by R. Rahm and S. Spencer \cite{RS}, yet another bumps were introduced. Assume that $\f_p$ is a function that is decreasing on $(0,1)$ and increasing on $(1,\infty)$
and such that $\int_0^{\infty}\frac{1}{\f_p(t)^{1/p}}\frac{dt}{t}<\infty$. It was shown in \cite{RS} that if
$$\sup_Qw_Q(\si_Q)^{p-1}\f_p(\si_Q)<\infty\quad\text{and}\quad \sup_Qw_Q(\si_Q)^{p-1}\f_{p'}^{p-1}(w_Q)<\infty,$$
then (\ref{twocz}) holds.

In this note we improve the integrability assumptions on $\f_p$ in the above result. In particular, we will show that for $t\ge 1$, the assumption $\int_1^{\infty}\frac{1}{\f_p(t)^{1/p}}\frac{dt}{t}<\infty$
can be improved to $\int_1^{\infty}\frac{1}{\f_p(t)}\frac{dt}{t}<\infty$. For $0<t<1$ our condition looks a bit technical but it shows that, for example, one can take
$\f_p(t)=\log\big(e+\frac{1}{t}\big)\log\log^{p+\e}\big(e^{e}+\frac{1}{t}\big).$
As in the previous works on this topic, our proof is based on the sparse domination.

The paper is organized as follows. In Section 2 we recall the standard scheme of reducing the initial problem to analysis of testing conditions.
Section 3 contains some, mostly known, auxiliary statements. The main result is contained in Section~4. In Section 5 we give a brief overview of known bump conditions.
Section 6 contains some further remarks and complements.

Throughout the paper we use the notation $A\lesssim B$ if $A\le CB$ with some independent constant $C$. We write $A\simeq B$ if $A\lesssim B$ and $B\lesssim A$.

\section{Standard reductions}
As in most of the previous works dealing with bump conditions, we will make use of the following tools.

\begin{list}{\labelitemi}{\leftmargin=1em}
\item Reducing to sparse operators. Recall that the sparse operator $A_{\mathcal S}$ is defined by
$$A_{\mathcal S}f=\sum_{Q\in {\mathcal S}}f_Q\chi_Q,$$
where ${\mathcal S}$ is a sparse family of dyadic cubes, which means that there exist disjoint subsets $E_Q\subset Q\in {\mathcal S}$ such that $|E_Q|\simeq |Q|$.
Since a Calder\'on-Zygmund operator~$T$ is pointwise bounded by at most $3^n$ sparse operators $A_{\mathcal S}$ (see, e.g., \cite{LO} for a very short proof of this fact), the problem is reduced
to finding sufficient conditions for $A_{\mathcal S}$ instead of $T$ in (\ref{twocz}).

\item In turn, the two-weight boundedness for $A_{\mathcal S}$ is characterized by testing conditions. Denote by $[w,\si]_p$ the best possible constant such that
for every $R\in {\mathcal S}$,
\begin{equation}\label{ws}
\Big\|\sum_{Q\in {\mathcal S}, Q\subseteq R}\si_Q\chi_Q\Big\|_{L^p(w)}\le [w,\si]_p\si(R)^{1/p}.
\end{equation}
Then (see \cite{H,LSU,T})
$$\|A_{{\mathcal S}}(\cdot\si)\|_{L^p(\si)\to L^p(w)}\simeq [w,\si]_p+[\si,w]_{p'}.$$

Thus, separated bump conditions typically appear as conditions on $(w,\si)$ for which $[w,\si]_p$ and $[\si,w]_{p'}$ are finite. By symmetry, it suffices to analyze $[w,\si]_p$.

\item As it was shown in \cite{HL, Li}, a very efficient way to handle the left-hand side of (\ref{ws}) is based on the following facts established respectively in \cite{COV} and \cite{H}:
for every dyadic lattice ${\mathscr D}$ and any non-negative sequence $\{a_Q\}_{Q\in {\mathscr D}}$,
\begin{equation}\label{cov}
\Big\|\sum_{Q\in {\mathscr D}}a_Q\chi_Q\Big\|_{L^p(w)}\simeq \left(\sum_{Q\in {\mathscr D}}a_Q\Big(\frac{1}{w(Q)}\sum_{Q'\in {\mathscr D}, Q'\subseteq Q}a_{Q'}w(Q')\Big)^{p-1}w(Q)\right)^{1/p},
\end{equation}
and for every sparse family ${\mathcal S}$ and $0<s<1$,
\begin{equation}\label{h}
\sum_{Q\in {\mathcal S}, Q\subseteq R}(w_Q)^{s}|Q|\lesssim (w_R)^{s}|R|.
\end{equation}

\item It was proved in \cite{H} that for every sparse family of dyadic cubes ${\mathcal S}$,
\begin{equation}\label{hyt}
\int_R\Big(\sum_{Q\in {\mathcal S}, Q\subseteq R}\si_Q\chi_Q\Big)^pw\lesssim (\sup_{Q\in{\mathcal S}}w_Q\si_Q^{p-1})\sum_{Q\in {\mathcal S},Q\subseteq R}\si(Q)
\end{equation}
(observe that this can be shown by combining (\ref{cov}) and (\ref{h})).
\end{list}

\section{Auxiliary propositions}
The following result is based on the same ideas as in \cite{La,Li}.

\begin{prop}\label{mp} Given a weight $\si$ and a cube $Q$, define $\la_Q(\si)$ such that $\la_Q(\si)\ge 1$ and for every sparse family ${\mathcal S}$,
\begin{equation}\label{conds}
\sum_{Q\in {\mathcal S}, Q\subseteq R}\la_Q(\si)^{-1}\si(Q)\lesssim \si(R).
\end{equation}
Then
$$[w,\si]_p\lesssim \sup_{Q\in {\mathcal S}}(w_Q)^{1/p}(\si_Q)^{1/p'}\la_Q(\si)^{1/p}\f^{1/p'}(\la_Q(\si)),$$
where $\f$ is an increasing function such that $\int_1^{\infty}\frac{1}{\f(t)}\frac{dt}{t}<\infty$.
\end{prop}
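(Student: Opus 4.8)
The plan is to estimate the left-hand side of \eqref{ws} directly via the Hytönen estimate \eqref{hyt}, but applied \emph{after} splitting the sparse family ${\mathcal S}$ (restricted to $Q\subseteq R$) into subfamilies on which $\la_Q(\si)$ is essentially constant. First I would fix $R\in{\mathcal S}$ and for each integer $k\ge 0$ set ${\mathcal S}_k=\{Q\in{\mathcal S}:Q\subseteq R,\ 2^k\le \la_Q(\si)<2^{k+1}\}$. On ${\mathcal S}_k$ we have $w_Q\si_Q^{p-1}\la_Q(\si)\simeq 2^k w_Q\si_Q^{p-1}$, so setting $\kappa=\sup_{Q\in{\mathcal S}}(w_Q)^{1/p}(\si_Q)^{1/p'}\la_Q(\si)^{1/p}\f^{1/p'}(\la_Q(\si))$ we get $\sup_{Q\in{\mathcal S}_k}w_Q\si_Q^{p-1}\lesssim \kappa^p 2^{-k}\f(2^k)^{-(p-1)}$ (up to the harmless constants coming from the doubling of $\f$ and of $2^k$, using that $\f$ is increasing). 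Applying \eqref{hyt} to the family ${\mathcal S}_k$ (which is sparse, being a subfamily of a sparse family) gives
$$
\int_R\Big(\sum_{Q\in{\mathcal S}_k}\si_Q\chi_Q\Big)^p w\ \lesssim\ \Big(\sup_{Q\in{\mathcal S}_k}w_Q\si_Q^{p-1}\Big)\sum_{Q\in{\mathcal S}_k}\si(Q)\ \lesssim\ \kappa^p\,\frac{2^{-k}}{\f(2^k)^{p-1}}\sum_{Q\in{\mathcal S}_k}\si(Q).
$$
Since $\la_Q(\si)\simeq 2^k$ on ${\mathcal S}_k$, hypothesis \eqref{conds} yields $\sum_{Q\in{\mathcal S}_k}\si(Q)\lesssim 2^k\sum_{Q\in{\mathcal S}_k}\la_Q(\si)^{-1}\si(Q)\lesssim 2^k\si(R)$, so each block satisfies $\int_R(\sum_{Q\in{\mathcal S}_k}\si_Q\chi_Q)^p w\lesssim \kappa^p\f(2^k)^{-(p-1)}\si(R)$.

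Next I would reassemble the blocks. Writing $F_k=\sum_{Q\in{\mathcal S}_k}\si_Q\chi_Q$, we have $\sum_{Q\subseteq R}\si_Q\chi_Q=\sum_{k\ge0}F_k$, and by the triangle inequality in $L^p(w)$,
$$
\Big\|\sum_{Q\in{\mathcal S},Q\subseteq R}\si_Q\chi_Q\Big\|_{L^p(w)}\ \le\ \sum_{k\ge0}\|F_k\|_{L^p(w)}\ \lesssim\ \kappa\,\si(R)^{1/p}\sum_{k\ge0}\f(2^k)^{-1/p'}.
$$
The remaining point is to see that $\sum_{k\ge0}\f(2^k)^{-1/p'}<\infty$ follows from $\int_1^\infty \f(t)^{-1}\,\frac{dt}{t}<\infty$. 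This is where a little care is needed: mere summability of $\f(2^k)^{-1}$ (which is what the integral test gives, $\f$ being increasing so $\f(2^k)^{-1}\simeq\int_{2^k}^{2^{k+1}}\f(t)^{-1}\frac{dt}{t}$) does not obviously imply summability of the larger terms $\f(2^k)^{-1/p'}$. The standard device is the Cauchy condensation / Hölder trick: split the index set $\{k\}$ into $\{k:\f(2^k)\le 2^k\}$ and its complement. On the first set $\f(2^k)^{-1/p'}\le (2^k)^{-1/p'}\f(2^k)^{-1/p'}\cdot\f(2^k)^{1/p'}\cdot$— more cleanly, on $\{\f(2^k)>2^{k}\}$ one has $\f(2^k)^{-1/p'}< 2^{-k/p'}$, summable; and on $\{\f(2^k)\le 2^{k}\}$ one has $\f(2^k)^{-1/p'}=\f(2^k)^{-1}\cdot \f(2^k)^{1/p}\le \f(2^k)^{-1}\,2^{k/p}$, which need not be summable, so this crude split is not quite enough by itself. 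The honest fix is to replace $\f$ in the statement by $\widetilde\f(t)=\f(t)^{p'}$ at the start if one wanted a clean $\f^{-1}$ summation, but since the Proposition is stated with exponent $1/p'$ on $\f$, the intended reading is that the hypothesis $\int_1^\infty\f^{-1}\frac{dt}{t}<\infty$ already forces $\sum\f(2^k)^{-1}<\infty$, and then one applies it to $\f$ \emph{after} noting $\f^{1/p'}\ge \f^{\,0}$...

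Rather than belabor this, here is the clean route I would actually take: do the dyadic splitting not in $\la_Q(\si)$ but in a way calibrated to $\f$. For each $k$ let ${\mathcal S}_k=\{Q:\ 2^k\le \f(\la_Q(\si))<2^{k+1}\}$. Then $\la_Q(\si)\ge \f^{-1}(2^k)=:t_k$ on ${\mathcal S}_k$ (with $\f^{-1}$ the generalized inverse), and the above computation gives $\|F_k\|_{L^p(w)}\lesssim \kappa\,\si(R)^{1/p}\,2^{-k/p'}$, so $\sum_k\|F_k\|_{L^p(w)}\lesssim\kappa\,\si(R)^{1/p}$ with \emph{no} integrability hypothesis needed — but now I must check that ${\mathcal S}=\bigcup_k{\mathcal S}_k$, i.e. that $\f(\la_Q(\si))$ ranges over $[1,\infty)$, and more importantly I have thrown away the control linking $\sum_{{\mathcal S}_k}\si(Q)$ to $\si(R)$, which is exactly where $\la_Q(\si)$ (not $\f(\la_Q(\si))$) must be bounded. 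So the two quantities genuinely fight each other, and the role of the integrability of $\f^{-1}$ is precisely to mediate between "$\la_Q(\si)$ small enough to use \eqref{conds}" and "$\f(\la_Q(\si))$ large enough to sum the geometric-type series." I would therefore carry out the first splitting (in powers of $\la_Q(\si)$) as written above, arriving at $\sum_{k\ge0}\f(2^k)^{-1/p'}\si(R)^{1/p}\kappa$, and then invoke the following elementary fact, which I would state and prove as a one-line lemma: if $\f$ is increasing and $\int_1^\infty\f(t)^{-1}\frac{dt}{t}<\infty$, then $\int_1^\infty\f(t)^{-1/p'}\frac{dt}{t}<\infty$ as well — this is \emph{false} in general, so the correct statement of the Proposition must be that $\f$ is the function appearing \emph{inside} after raising to the power, i.e. one should read the hypothesis as: there is an increasing $\psi$ with $\int_1^\infty\psi^{-1}\frac{dt}{t}<\infty$ and $\f=\psi^{1/p'}$ is not what is meant; rather $\sum \f(2^k)^{-1/p'}=\sum(\f(2^k)^{1/(p-1)})^{-1/p}$ and the integrability of $\f^{-1}$ gives $\sum\f(2^k)^{-1}<\infty\Rightarrow \f(2^k)\to\infty\Rightarrow \f(2^k)^{-1/p'}\le C\f(2^k)^{-1}$ for all large $k$ once $\f(2^k)\ge1$, hence $\f(2^k)^{-1/p'}\le\f(2^k)^{-1}$ since the exponent $1/p'<1$ makes the power \emph{larger}, not smaller — wait, $\f\ge1\Rightarrow \f^{-1/p'}\ge\f^{-1}$, the wrong way.

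I see the subtlety, so the honest plan is: the hypothesis $\int_1^\infty\f(t)^{-1}\frac{dt}{t}<\infty$ must in the proof be used as $\int_1^\infty\f(t)^{-1}\frac{dt}{t}<\infty$ on a function which appears to the power $1$ — meaning in the block estimate one should get the exponent $p-1$ on $\f$ \emph{outside} the integral too, not $1/p'$. Concretely, I expect the correct chain is: $\|F_k\|_{L^p(w)}^p\lesssim \kappa^p\f(2^k)^{-(p-1)}\si(R)$, hence $\|F_k\|_{L^p(w)}\lesssim \kappa\,\si(R)^{1/p}\f(2^k)^{-(p-1)/p}=\kappa\,\si(R)^{1/p}\f(2^k)^{-1/p'}$, and then one does \emph{not} sum $\|F_k\|$ but instead sums $\|F_k\|^p$ after an application of Hölder's inequality with a splitting $1=\sum a_k$, $\|\sum F_k\|_p^p\le(\sum a_k^{-1/(p-1)})^{p-1}\sum a_k^{?}\|F_k\|_p^p$ — choosing $a_k=\f(2^k)^{-(p-1)/p}$ normalized — so that the convergent series that appears is exactly $\sum_k a_k^{-1/(p-1)}\cdot(\text{stuff})$ reducing to $\int\f^{-1}\frac{dt}{t}$. \textbf{The main obstacle}, then, is precisely this bookkeeping: arranging the two exponents (the $p-1$ from Hölder's inequality in $L^p(w)$ applied to the sum over $k$, and the $p-1$ coming from $(\si_Q)^{p-1}$ in \eqref{hyt}) so that the divergent-looking geometric factor $2^k$ from \eqref{conds} is exactly absorbed and the surviving series is the one controlled by the hypothesis $\int_1^\infty\f(t)^{-1}\frac{dt}{t}<\infty$; everything else is the routine application of \eqref{hyt} and \eqref{conds} block-by-block as sketched in the first paragraph.
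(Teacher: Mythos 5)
Your setup — splitting ${\mathcal S}$ into the families $\{Q:2^k\le\la_Q(\si)<2^{k+1}\}$, applying \eqref{hyt} on each block, and recognizing that the naive triangle-inequality summation would require $\sum_k\f(2^k)^{-1/p'}<\infty$, which does \emph{not} follow from $\int_1^\infty\f(t)^{-1}\frac{dt}{t}<\infty$ — is exactly the paper's starting point, and you have correctly located the crux. But the proof is not completed there, and the repair you sketch at the end cannot work. The fatal move is invoking \eqref{conds} \emph{inside each block}, i.e.\ replacing $\sum_{Q\in{\mathcal S}_k}\si(Q)$ by $2^k\si(R)$, which leaves you only with the per-block bound $\|F_k\|_{L^p(w)}^p\lesssim\kappa^p\f(2^k)^{-(p-1)}\si(R)$. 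No choice of weights $a_k$ in a weighted H\"older/triangle argument can recover the claim from this information alone: already for $p=2$, if $\sum_k a_k<\infty$ and $\sum_k a_k^{-1}\f(2^k)^{-1}<\infty$, then Cauchy--Schwarz forces $\sum_k\f(2^k)^{-1/2}<\infty$, i.e.\ you would be re-deriving the Rahm--Spencer-type hypothesis $\int\f^{-1/p}\frac{dt}{t}<\infty$ rather than proving the improved statement. So the "bookkeeping" you defer is not bookkeeping; the per-block application of \eqref{conds} has already discarded the information that makes the weaker hypothesis sufficient.

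The missing idea is to postpone \eqref{conds} to the very end and keep the block masses intact. For $Q$ in the $k$-th block one has $w_Q\si_Q^{p-1}\le M\,2^{-k}\f(2^k)^{-(p-1)}$ with $M=\sup_Q w_Q\si_Q^{p-1}\la_Q(\si)\f^{p-1}(\la_Q(\si))$, and the factor $2^{-k}$ should be absorbed into the block mass, since $2^{-k}\sum_{Q\in{\mathcal F}_k,Q\subseteq R}\si(Q)\simeq\sum_{Q\in{\mathcal F}_k,Q\subseteq R}\la_Q(\si)^{-1}\si(Q)$. Thus \eqref{hyt} gives
$$
\Big\|\sum_{Q\in{\mathcal F}_k,Q\subseteq R}\si_Q\chi_Q\Big\|_{L^p(w)}\lesssim M^{1/p}\,\f(2^k)^{-1/p'}\Big(\sum_{Q\in{\mathcal F}_k,Q\subseteq R}\la_Q(\si)^{-1}\si(Q)\Big)^{1/p},
$$
and now the discrete H\"older inequality in $k$ with exponents $(p',p)$ pairs $\f(2^k)^{-1/p'}$ against the $1/p$-powers of the block masses: the first factor is $\big(\sum_k\f(2^k)^{-1}\big)^{1/p'}<\infty$ (this is precisely where the hypothesis enters, with exponent $1$ on $\f$), while the second factor reassembles, over all $k$ at once, into $\big(\sum_{Q\in{\mathcal S},Q\subseteq R}\la_Q(\si)^{-1}\si(Q)\big)^{1/p}\lesssim\si(R)^{1/p}$ by a single application of \eqref{conds}. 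The point is that \eqref{conds} controls the \emph{total} of the weighted block masses, which is strictly stronger than each block mass being $\lesssim\si(R)$, and it is this summability across blocks that trades the exponent $1/p'$ for the exponent $1$ in the convergence requirement on $\f$.
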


\begin{proof} For $k\ge 0$ define
\begin{equation}\label{fk}
{\mathcal F}_k=\{Q\in {\mathcal S}: 2^k\le \la_Q(\si)\le 2^{k+1}\}.
\end{equation}
Then, setting $M=\sup_{Q\in {\mathcal S}}w_Q(\si_Q)^{p-1}\la_Q(\si)\f^{p-1}(\la_Q(\si))$, by (\ref{hyt}) we obtain
\begin{eqnarray}
&&\Big\|\sum_{Q\in {\mathcal S}, Q\subseteq R}\si_Q\chi_Q\Big\|_{L^p(w)}\le \sum_{k=0}^{\infty}\Big\|\sum_{Q\in {\mathcal F}_k, Q\subseteq R}\si_Q\chi_Q\Big\|_{L^p(w)}\nonumber\\
&&\lesssim \sum_{k=0}^{\infty}\Big((\sup_{Q\in{\mathcal F}_k}w_Q\si_Q^{p-1})\sum_{Q\in {\mathcal F}_k,Q\subseteq R}\si(Q)\Big)^{1/p}\nonumber\\
&&\lesssim M^{1/p}\sum_{k=0}^{\infty}\frac{1}{\f(2^k)^{1/p'}}\Big(\sum_{Q\in {\mathcal F}_k,Q\subseteq R}\la_Q(\si)^{-1}\si(Q)\Big)^{1/p}\label{subs}\\
&&\lesssim M^{1/p}\Big(\sum_{k=0}^{\infty}\frac{1}{\f(2^k)}\Big)^{1/p'}\Big(\sum_{Q\in {\mathcal S}, Q\subseteq R}\la_Q(\si)^{-1}\si(Q)\Big)^{1/p}\lesssim M^{1/p}\si(R)^{1/p},\nonumber
\end{eqnarray}
which completes the proof.
\end{proof}

The following proposition is contained in \cite{RS}. We give its proof for the sake of completeness.

\begin{prop}\label{rs} Let ${\mathcal S}$ be a sparse family of dyadic cubes. For $k\in {\mathbb Z}$ define
$${\mathcal F}_k=\{Q\in {\mathcal S}: 2^k<\si_Q\le 2^{k+1}\}.$$
Then
$$\sum_{Q\in {\mathcal F}_k,Q\subseteq R}\si(Q)\lesssim \si(R).$$
\end{prop}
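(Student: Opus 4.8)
The plan is to exploit the fact that on each family $\mathcal{F}_k$ the density $\sigma_Q$ is essentially constant, equal to $2^k$ up to a factor of $2$, which should let me transfer the problem to an estimate on the sum of the \emph{measures} $|Q|$ and then invoke the sparse packing bound \eqref{h}. Concretely, for $Q\in\mathcal{F}_k$ with $Q\subseteq R$ I would write $\sigma(Q)=\sigma_Q|Q|\le 2^{k+1}|Q|$, so that $\sum_{Q\in\mathcal{F}_k,Q\subseteq R}\sigma(Q)\lesssim 2^k\sum_{Q\in\mathcal{F}_k,Q\subseteq R}|Q|$.

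Next I would bound $\sum_{Q\in\mathcal{F}_k,Q\subseteq R}|Q|$. If $\mathcal{F}_k$ is nonempty among cubes contained in $R$, then every such $Q$ satisfies $\sigma_Q>2^k$, i.e. $(\sigma_Q)^s|Q|>2^{ks}|Q|$ for any fixed $0<s<1$; hence $|Q|<2^{-ks}(\sigma_Q)^s|Q|$ and summing gives $\sum_{Q\in\mathcal{F}_k,Q\subseteq R}|Q|\le 2^{-ks}\sum_{Q\in\mathcal{S},Q\subseteq R}(\sigma_Q)^s|Q|\lesssim 2^{-ks}(\sigma_R)^s|R|$ by \eqref{h}. (One should also note the trivial bound $\sum_{Q\in\mathcal{F}_k,Q\subseteq R}|Q|\le|R|$ coming from disjointness of the $E_Q$, valid regardless of $k$.) Combining, $\sum_{Q\in\mathcal{F}_k,Q\subseteq R}\sigma(Q)\lesssim 2^{k(1-s)}(\sigma_R)^s|R| = 2^{k(1-s)}(\sigma_R)^{s-1}\sigma(R)$.

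The remaining point is to control the factor $2^{k(1-s)}(\sigma_R)^{s-1}$. Here I would distinguish two regimes relative to $\sigma_R$. When $2^k\le\sigma_R$: pick $s$ close to $1$ so $1-s>0$ and then $2^{k(1-s)}\le(\sigma_R)^{1-s}$, which exactly cancels $(\sigma_R)^{s-1}$ and yields the bound $\lesssim\sigma(R)$. When $2^k>\sigma_R$: I want $1-s<0$, i.e. $s>1$, but \eqref{h} only holds for $s<1$; however in this regime one can instead use the dual/complementary estimate, or more simply observe that the cubes $Q\in\mathcal{F}_k$ with $Q\subseteq R$ must in particular be "small" — but actually the cleanest route is to apply \eqref{h} with the roles arranged so that the exponent works, or to split off $R$ itself. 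I expect the main obstacle to be precisely this second regime: whether $\sigma_Q$ can exceed $\sigma_R$ for $Q\subseteq R$ (it can, if mass concentrates in a subcube), so a single application of \eqref{h} with one fixed $s<1$ does not suffice uniformly. The fix should be to run the argument with two different choices of $s\in(0,1)$ — one handling $2^k\le\sigma_R$ via the measure bound above, and the other handling $2^k>\sigma_R$ by writing $\sigma(Q)=\sigma_Q|Q|$ and using $(\sigma_Q)^{1-s}\le(\sigma_Q)$ together with \eqref{h} applied to $(\sigma_Q)^s$ after factoring out a power of $2^k$, so that the geometric decay in $k$ survives summation back up to the bound $\sigma(R)$ — or, since the proposition only asserts the estimate for a single fixed $k$ (not summed over $k$), simply to take the $\min$ of the two bounds $2^k|R|$ and $\sigma(R)$, noting $2^k|R|\le\sigma(R)$ fails in general while the measure bound $\sum|Q|\le|R|$ combined with $\sigma(Q)\le 2^{k+1}|Q|$ already gives $\lesssim 2^k|R|$, and in the regime $2^k\le\sigma_R$ this is $\lesssim\sigma_R|R|=\sigma(R)$, whereas in the regime $2^k>\sigma_R$ one uses \eqref{h} with $s<1$ as above to get $\lesssim 2^{k(1-s)}(\sigma_R)^{s-1}\sigma(R)$ and then \emph{another} application of disjointness. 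Since the statement is for fixed $k$, I anticipate the honest argument is the two-case split, with the $2^k>\sigma_R$ case being where care is needed and where \eqref{h} must be applied judiciously.
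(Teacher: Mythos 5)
Your reduction to the measure sum and the application of \eqref{h} works only in the regime $2^k\le\sigma_R$, and you correctly identify the other regime as the obstacle --- but none of the fixes you sketch actually closes it. When $2^k>\sigma_R$ (mass concentrating in subcubes of $R$), both of your candidate bounds, namely $2^k|R|$ from plain disjointness and $2^{k(1-s)}(\sigma_R)^{s-1}\sigma(R)$ from \eqref{h}, exceed $\sigma(R)$ by a factor $(2^k/\sigma_R)^{\text{positive power}}$, which can be arbitrarily large; taking the minimum of the two, or juggling two different values of $s\in(0,1)$, does not help (and pushing $s\to 1$ costs you the constant in \eqref{h}, which blows up). So the proposal has a genuine gap precisely where you anticipated it, and the speculative list of remedies is not a proof.

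The missing idea, which is how the paper argues, is to localize before comparing averages: let $\{Q_j\}$ be the \emph{maximal} cubes of $\{Q\in\mathcal{F}_k:\ Q\subseteq R\}$. These are pairwise disjoint, and --- crucially --- each $Q_j$ itself lies in $\mathcal{F}_k$, so $\sigma_{Q_j}>2^k$; in other words, relative to each $Q_j$ you are automatically in your ``good'' regime. Then the Carleson packing property of sparse families (just disjointness of the sets $E_Q$, no fractional power $s$ needed) gives $\sum_{Q\in\mathcal{F}_k,\,Q\subseteq Q_j}\sigma(Q)\le 2^{k+1}\sum_{Q\subseteq Q_j}|Q|\lesssim 2^k|Q_j|\lesssim \sigma_{Q_j}|Q_j|=\sigma(Q_j)$, and summing over the disjoint $Q_j\subseteq R$ yields $\sum_j\sigma(Q_j)\le\sigma(R)$. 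The point is that the top-level comparison should be made with $\sigma_{Q_j}$ (pinned at $\approx 2^k$ by membership in $\mathcal{F}_k$), not with $\sigma_R$, which is exactly what defeats your approach when $2^k>\sigma_R$.
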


\begin{proof} Let $\{Q_j\}$ be the maximal cubes of $\{Q\in {\mathcal F}_k, Q\subseteq R\}$. Then, by maximality, they are pairwise disjoint, and also, by sparseness,
\begin{eqnarray*}
\sum_{Q\in {\mathcal F}_k,Q\subseteq R}\si(Q)&=&\sum_j\sum_{Q\in {\mathcal F}_k,Q\subseteq Q_j}\si(Q)\le 2^{k+1}\sum_j\sum_{Q\in {\mathcal F}_k,Q\subseteq Q_j}|Q|\\
&\lesssim& 2^k\sum_j|Q_j|\lesssim \sum_j\si(Q_j)\lesssim \si(R),
\end{eqnarray*}
which completes the proof.
\end{proof}

\begin{prop}\label{psi} Let $\psi$ be a function that is decreasing on $(0,1)$ and increasing on $(1,\infty)$, and such that $\int_0^{\infty}\frac{1}{\psi(t)}\frac{dt}{t}<\infty$.
Then
$$\sum_{Q\in {\mathcal S},Q\subseteq R}\frac{1}{\psi(\si_Q)}\si(Q)\lesssim \si(R).$$
\end{prop}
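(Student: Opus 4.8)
The plan is to split the sum over $Q\in{\mathcal S},\ Q\subseteq R$ according to the dyadic size of $\si_Q$, exactly as in Proposition~\ref{psi}'s natural companion Proposition~\ref{rs}. For $k\in{\mathbb Z}$ set ${\mathcal F}_k=\{Q\in{\mathcal S}:2^k<\si_Q\le 2^{k+1}\}$, so that every $Q\in{\mathcal S}$ lies in exactly one ${\mathcal F}_k$. On ${\mathcal F}_k$ we have $\si_Q\simeq 2^k$, and since $\psi$ is monotone on each of the two half-lines, $\psi(\si_Q)\gtrsim \psi(2^k)$ on ${\mathcal F}_k$ (here I use that $\psi$ is decreasing on $(0,1)$ and increasing on $(1,\infty)$, so replacing $\si_Q$ by the comparable endpoint $2^k$ only decreases $\psi$ up to a harmless constant coming from the factor $2$ in the dyadic bands). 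Hence
$$
\sum_{Q\in{\mathcal S},Q\subseteq R}\frac{1}{\psi(\si_Q)}\si(Q)
\lesssim\sum_{k\in{\mathbb Z}}\frac{1}{\psi(2^k)}\sum_{Q\in{\mathcal F}_k,Q\subseteq R}\si(Q).
$$

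Next I would invoke Proposition~\ref{rs}, which gives exactly $\sum_{Q\in{\mathcal F}_k,Q\subseteq R}\si(Q)\lesssim\si(R)$ with a constant independent of $k$ and $R$. Substituting this bound yields
$$
\sum_{Q\in{\mathcal S},Q\subseteq R}\frac{1}{\psi(\si_Q)}\si(Q)\lesssim\Big(\sum_{k\in{\mathbb Z}}\frac{1}{\psi(2^k)}\Big)\si(R),
$$
so it remains to see that $\sum_{k\in{\mathbb Z}}\psi(2^k)^{-1}<\infty$. This is the elementary comparison between a dyadic series and the integral $\int_0^\infty\frac{1}{\psi(t)}\frac{dt}{t}$: on each band $t\in[2^k,2^{k+1}]$ one has $\frac{1}{\psi(t)}\frac{dt}{t}\gtrsim \frac{1}{\psi(2^{k+1})}$ or $\gtrsim\frac{1}{\psi(2^k)}$ (depending on the side of $1$, using monotonicity of $\psi$ again), and summing over $k$ recovers $\sum_k\psi(2^k)^{-1}\lesssim\int_0^\infty\frac{1}{\psi(t)}\frac{dt}{t}<\infty$. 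The hypothesis is stated precisely so that both the small-scale ($t\to0^+$, where $\psi$ decreasing near $0$ makes $\psi(t)^{-1}$ integrable near $0$ against $dt/t$) and large-scale parts converge.

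The main (and only) delicate point is the monotonicity bookkeeping when $2^k$ and $2^{k+1}$ straddle the point $t=1$: for the finitely many indices $k$ with $2^k\le 1\le 2^{k+1}$ one cannot directly say $\psi(\si_Q)\ge\psi(2^k)$, but since this is a bounded set of indices and $\psi\ge\psi(1)>0$ there, those terms contribute only a bounded amount and can be absorbed into the implied constant; the same remark handles the corresponding terms in the integral comparison. Everything else is a direct concatenation of Proposition~\ref{rs} with the dyadic discretization of the integrability assumption, so no genuinely new estimate is needed.
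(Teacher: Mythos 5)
Your proof is correct and follows essentially the same route as the paper: decompose into the dyadic bands ${\mathcal F}_k$ of Proposition~\ref{rs}, bound $1/\psi(\si_Q)$ by $1/\psi(2^k)$ (up to the band straddling $t=1$) via monotonicity, apply Proposition~\ref{rs} on each band, and sum the series $\sum_k\psi(2^k)^{-1}$, which converges by the integral hypothesis. Your extra remarks on the band containing $t=1$ are a slightly more careful bookkeeping of a point the paper's one-line proof leaves implicit, but they do not change the argument.
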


\begin{proof} Setting ${\mathcal F}_k$ as in the previous proposition, we obtain
$$
\sum_{Q\in {\mathcal S},Q\subseteq R}\frac{1}{\psi(\si_Q)}\si(Q)=\sum_{k\in {\mathbb Z}}\sum_{Q\in {\mathcal F}_k,Q\subseteq R}\frac{1}{\psi(\si_Q)}\si(Q)\lesssim
\Big(\sum_{k\in {\mathbb Z}}\frac{1}{\psi(2^k)}\Big)\si(R),
$$
and we are done.
\end{proof}

\section{Main result}
\begin{theorem}\label{mr}
Assume that $\psi$ is a function that is decreasing on $(0,1)$ and increasing on $(1,\infty)$, and such that $\int_0^{\infty}\frac{1}{\psi(t)}\frac{dt}{t}<\infty$. Assume also that
$\psi(t)\lesssim e^{\sqrt t}$ for $t\ge 1$. Next, let $\f$ be an increasing function on $(1,\infty)$ such that $\int_1^{\infty}\frac{1}{\f(t)}\frac{dt}{t}<\infty$. Define
$$
\nu_p(t) = \begin{cases} \psi(t)\f^{p-1}(\psi(t)), &0<t<1\\
 \psi(t),& t\ge 1, \end{cases}
$$
and set
$$[w,\si]_{\nu_p}=\sup_{Q}w_Q(\si_Q)^{p-1}\nu_p(\si_Q).$$
Then
$$\|T(\cdot\si)\|_{L^p(\si)\to L^p(w)}\lesssim [w,\si]_{\nu_p}^{1/p}+[\si,w]_{\nu_{p'}}^{1/p'}.$$
\end{theorem}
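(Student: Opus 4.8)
The plan is to reduce the theorem to Proposition~\ref{mp} via the standard reductions of Section 2. Since $T$ is pointwise dominated by finitely many sparse operators $A_{\mathcal S}$, and since $\|A_{\mathcal S}(\cdot\si)\|_{L^p(\si)\to L^p(w)}\simeq[w,\si]_p+[\si,w]_{p'}$, by symmetry it suffices to show $[w,\si]_p\lesssim[w,\si]_{\nu_p}^{1/p}$, i.e.\ that the testing constant defined by (\ref{ws}) is controlled. To apply Proposition~\ref{mp} I need to exhibit a choice of $\la_Q(\si)\ge 1$ satisfying the Carleson-type condition (\ref{conds}) such that the resulting bound $\sup_Q (w_Q)^{1/p}(\si_Q)^{1/p'}\la_Q(\si)^{1/p}\f^{1/p'}(\la_Q(\si))$ is at most $[w,\si]_{\nu_p}^{1/p}$.

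The key step is the choice of $\la_Q(\si)$. The natural guess is
$$
\la_Q(\si)=\begin{cases}\psi(\si_Q)\f^{p-1}(\psi(\si_Q)), & 0<\si_Q<1,\\[1mm]\psi(\si_Q), & \si_Q\ge 1,\end{cases}
$$
so that $\la_Q(\si)\f^{p-1}(\la_Q(\si))$ (the quantity $M$ in the proof of Proposition~\ref{mp} contains $w_Q\si_Q^{p-1}\la_Q(\si)\f^{p-1}(\la_Q(\si))$) matches $\nu_p(\si_Q)$ up to constants; this is exactly where the hypothesis $\f$ increasing and the case split in $\nu_p$ come in, and where I expect to need $\psi(t)\lesssim e^{\sqrt t}$ to make sure $\f^{p-1}(\psi(t))$ does not blow up faster than is harmless (so that $\la_Q(\si)\f^{p-1}(\la_Q(\si))\lesssim\psi(\si_Q)\cdot(\text{slowly growing})$, which is absorbed since $\f$ grows slower than any power). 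Then it remains to verify (\ref{conds}) for this $\la_Q$, i.e.
$$
\sum_{Q\in{\mathcal S},\,Q\subseteq R}\la_Q(\si)^{-1}\si(Q)\lesssim\si(R).
$$
Here I would split the sum according to whether $\si_Q<1$ or $\si_Q\ge 1$. On the set $\si_Q\ge 1$, $\la_Q(\si)^{-1}=\psi(\si_Q)^{-1}$, and on the set $\si_Q<1$, $\la_Q(\si)^{-1}\le\psi(\si_Q)^{-1}$ since $\f\ge 1$ eventually (or can be normalized so); thus in both cases the summand is bounded by $\psi(\si_Q)^{-1}\si(Q)$, and Proposition~\ref{psi} gives the desired Carleson bound directly, using $\int_0^\infty\psi(t)^{-1}\,dt/t<\infty$.

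Plugging this $\la_Q$ into Proposition~\ref{mp}, the bound becomes
$$
[w,\si]_p\lesssim\sup_Q\big(w_Q\si_Q^{p-1}\la_Q(\si)\f^{p-1}(\la_Q(\si))\big)^{1/p}\lesssim\sup_Q\big(w_Q\si_Q^{p-1}\nu_p(\si_Q)\big)^{1/p}=[w,\si]_{\nu_p}^{1/p},
$$
where the middle inequality is the one requiring $\psi(t)\lesssim e^{\sqrt t}$: for $\si_Q\ge 1$ one has $\la_Q\f^{p-1}(\la_Q)=\psi(\si_Q)\f^{p-1}(\psi(\si_Q))$, which must be shown to be $\lesssim\psi(\si_Q)$; but this is false in general unless one absorbs the $\f^{p-1}(\psi)$ factor. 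So in fact the cleanest route is: for $t\ge1$ use $\f^{p-1}(\psi(t))\lesssim\f^{p-1}(e^{\sqrt t})$ and argue that $\f(e^{\sqrt t})\lesssim$ something sub-polynomial in $t$ — more precisely, use the weaker Carleson condition for $\la_Q=\psi(\si_Q)$ without the $\f$-factor in the case $\si_Q\ge 1$ by applying Proposition~\ref{mp} with a modified $\f$, or split $\mathcal S$ into the two regimes at the outset and handle each with the appropriate instance of Proposition~\ref{mp}. Either way, the genuine obstacle is reconciling the extra $\f^{p-1}$ factor that Proposition~\ref{mp} forces into the bound with the clean definition of $\nu_p$ for $t\ge1$; the exponential growth bound on $\psi$ is precisely the hypothesis that makes this reconciliation possible, and getting this bookkeeping right — rather than any deep new estimate — is where the care is needed. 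By symmetry the same argument applied to $(\si,w)$ and $p'$ yields $[\si,w]_{p'}\lesssim[\si,w]_{\nu_{p'}}^{1/p'}$, and combining the two finishes the proof.
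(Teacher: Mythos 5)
Your treatment of the cubes with $\si_Q<1$ is correct and coincides with the paper: there one takes $\la_Q(\si)=\psi(\si_Q)$, verifies the Carleson condition (\ref{conds}) by Proposition \ref{psi}, and Proposition \ref{mp} yields exactly the factor $\psi(\si_Q)\f^{p-1}(\psi(\si_Q))=\nu_p(\si_Q)$. The genuine gap is in the regime $\si_Q\ge 1$, and it is not bookkeeping. Proposition \ref{mp} structurally cannot give the bound $\sup_Q w_Q\si_Q^{p-1}\psi(\si_Q)$ without an extra factor: any admissible $\f$ there must satisfy $\int_1^\infty\frac{dt}{t\f(t)}<\infty$, hence is unbounded, so its conclusion always carries the unbounded factor $\f^{p-1}(\la_Q(\si))$. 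With $\la_Q(\si)=\psi(\si_Q)$ this produces $\psi(\si_Q)\f^{p-1}(\psi(\si_Q))$, which is precisely the Rahm--Spencer-type bump the theorem is designed to improve and is not $\lesssim\psi(\si_Q)$ (take $\psi(t)=\f(t)=\log^{1+\e}(e+t)$: the ratio is $\simeq\log\log^{(1+\e)(p-1)}$ of $\si_Q$, unbounded). Your proposed fixes --- absorbing $\f^{p-1}(\psi)$ because $\f$ is slowly growing, bounding $\f^{p-1}(\psi(t))$ by $\f^{p-1}(e^{\sqrt t})$, or rerunning Proposition \ref{mp} with a ``modified $\f$'' --- all fail for the same reason: the claimed bound for $t\ge 1$ has no extra factor at all, and no admissible $\f$ is bounded. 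In particular the hypothesis $\psi(t)\lesssim e^{\sqrt t}$ is not there to enable any such absorption.

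What the paper actually does on ${\mathcal S}_2=\{Q:\si_Q\ge 1\}$ --- and this is the main content of the theorem --- is to bypass Proposition \ref{mp} entirely. One applies the Cascante--Ortega--Verbitsky formula (\ref{cov}) to $\big\|\sum_{Q\in{\mathcal S}_2,Q\subseteq R}\si_Q\chi_Q\big\|_{L^p(w)}^p$, splits the inner sum at the threshold $\si_{Q'}\le\si_Q^{1/2}$ versus $\si_{Q'}>\si_Q^{1/2}$ as in (\ref{nspl}), stratifies the first part into $N\simeq\log\big(\psi(\si_Q)\big)$ levels according to the size of $w_{Q'}(\si_{Q'})^{p-1}$ relative to $[w,\si]_{A_p}\psi(\si_Q)^{-1}$, and controls each level by passing to maximal cubes and invoking the sparse Carleson estimate (\ref{hyt})/(\ref{h}); the hypothesis $\psi(t)\lesssim e^{\sqrt t}$ enters exactly here, to guarantee $N\lesssim\si_Q^{1/2}$ so that the total contribution $N\,\si_Q^{1/2}w(Q)$ is $\lesssim\si_Q w(Q)$, after which Proposition \ref{psi} (also with $\psi(t^{1/2})$ for the large-average part) closes the estimate; moreover the cases $p\ge 2$ and $1<p<2$ require separate, though parallel, computations. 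None of this appears in your proposal, so the $\si_Q\ge 1$ half of the proof is missing rather than merely compressed.
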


\begin{remark}\label{r1} A typical example of $\nu_p$ on $(0,1)$ can be obtained by setting
$$\psi(t)=\log\Big(e+\frac{1}{t}\Big)\log\log^{1+\e}\Big(e^e+\frac{1}{t}\Big)\quad(0<t<1)$$
and
$$\f(t)=\log(e+t)\log\log^{1+\e}(e^e+t)\quad(1<t<\infty).$$
Then, for $0<t<1$,
$$
\nu_p(t)\simeq
\log\Big(e+\frac{1}{t}\Big)\log\log^{p+\e}\Big(e^e+\frac{1}{t}\Big)\log\log\log^{(p-1)(1+\e)}\Big(e^{e^e}+\frac{1}{t}\Big).
$$
\end{remark}

\begin{remark}\label{r2} In the cases of interest the growth of  $\psi$ for $t\ge 1$ is logarithmic (e.g., $\psi(t)=\log^{1+\e}(e+t)$
or $\psi(t)=\log(e+t)\log\log^{1+\e}(e^e+t)$, etc.), and hence the assumption
$\psi(t)\lesssim e^{\sqrt t}$ for $t\ge 1$ holds trivially.
\end{remark}

\begin{proof}[Proof of Theorem \ref{mr}] As we have discussed in Section 2, it suffices to estimate $[w,\si]_p$ in (\ref{ws}), that is, our goal is to show that
\begin{equation}\label{goal}
\Big\|\sum_{Q\in {\mathcal S}, Q\subseteq R}\si_Q\chi_Q\Big\|_{L^p(w)}\lesssim [w,\si]_{\nu_p}^{1/p}\si(R)^{1/p}.
\end{equation}

Set
$${\mathcal S}_1=\{Q\in {\mathcal S}:\si_Q<1\}\quad\text{and}\quad {\mathcal S}_2=\{Q\in {\mathcal S}:\si_Q\ge 1\}.$$
An immediate combination of Propositions \ref{mp} and \ref{psi} yields
\begin{eqnarray*}
\Big\|\sum_{Q\in {\mathcal S}_1, Q\subseteq R}\si_Q\chi_Q\Big\|_{L^p(w)}
&\lesssim& \Big(\sup_{Q\in {\mathcal S}_1}(w_Q)^{1/p}(\si_Q)^{1/p'}\psi(\si_Q)^{1/p}\f^{1/p'}(\psi(\si_Q))\Big)\si(R)^{1/p}\\
&\lesssim& [w,\si]_{\nu_p}^{1/p}\si(R)^{1/p}.
\end{eqnarray*}

Therefore, in order to prove (\ref{goal}), it remains to show that
\begin{equation}\label{rem}
\Big\|\sum_{Q\in {\mathcal S}_2, Q\subseteq R}\si_Q\chi_Q\Big\|_{L^p(w)}^p\lesssim [w,\si]_{\nu_p}\si(R).
\end{equation}
At this point we apply (\ref{cov}), which says that
\begin{equation}\label{inn}
\Big\|\sum_{Q\in {\mathcal S}_2, Q\subseteq R}\si_Q\chi_Q\Big\|_{L^p(w)}^p\simeq \sum_{Q\in {\mathcal S}_2, Q\subseteq R}\si_Q\Big(\frac{1}{w(Q)}
\sum_{Q'\in {\mathcal S}_2, Q'\subseteq Q}\si_{Q'}w(Q')\Big)^{p-1}w(Q).
\end{equation}
Split the inner sum on the right-hand side of~(\ref{inn}) as follows:
\begin{equation}\label{nspl}
\sum_{Q'\in {\mathcal S}_2, Q'\subseteq Q}\si_{Q'}w(Q')=
\sum_{Q'\in {\mathcal S}_2, Q'\subseteq Q\atop \si_{Q'}\le \si_Q^{1/2}}\si_{Q'}w(Q')+
\sum_{Q'\in {\mathcal S}_2, Q'\subseteq Q\atop \si_{Q'}>\si_Q^{1/2}}\si_{Q'}w(Q').
\end{equation}

Suppose first that $p\ge 2$.
Consider the first sum on the right-hand side of (\ref{nspl}). Denote
$${\mathcal F}_0=\{Q'\in {\mathcal S}_2: w_{Q'}(\si_{Q'})^{p-1}\le [w,\si]_{A_p}\psi(\si_Q)^{-1}\}$$
and, for $k=1,\dots,N\simeq \log\big(\psi(\si_Q)\big)$,
$${\mathcal F}_k=\{Q'\in {\mathcal S}_2: 2^{k-1}[w,\si]_{A_p}\psi(\si_Q)^{-1}<w_{Q'}(\si_{Q'})^{p-1}\le 2^{k}[w,\si]_{A_p}\psi(\si_Q)^{-1}\},$$
where, abusing the notation, we set
$$[w,\si]_{A_p}=\sup_{Q\in {\mathcal S}_2}w_Q(\si_Q)^{p-1}.$$
Denote also
$$E_k=\{Q'\in {\mathcal F}_k: Q'\subseteq Q, \si_{Q'}\le \si_Q^{1/2}\}.$$
Then
\begin{eqnarray*}
\sum_{{Q'\in {\mathcal S}_2, Q'\subseteq Q}\atop \si_{Q'}\le \si_Q^{1/2}}\si_{Q'}w(Q')=\sum_{k=0}^N\sum_{Q'\in E_k}\si_{Q'}w(Q').\\
\end{eqnarray*}

By (\ref{hyt}),
\begin{eqnarray*}
\sum_{Q'\in E_0}\si_{Q'}w(Q')
&\le&\left(\frac{[w,\si]_{A_p}}{\psi(\si_Q)}\right)^{\frac{1}{p-1}}\sum_{{Q'\in {\mathcal S}_2, Q'\subseteq Q}}(w_{Q'})^{1-\frac{1}{p-1}}|Q'|\\
&\lesssim &\left(\frac{[w,\si]_{A_p}}{\psi(\si_Q)}\right)^{\frac{1}{p-1}}(w_{Q})^{1-\frac{1}{p-1}}|Q|.
\end{eqnarray*}
Fix $1\le k\le N$. Let $\{Q_j\}$ be the maximal cubes of $E_k$.
Then, by (\ref{hyt}),
\begin{eqnarray*}
\sum_{Q'\in E_k,Q'\subseteq Q_j}\si_{Q'}w(Q')
&\le&\left(\frac{2^{k}[w,\si]_{A_p}}{\psi(\si_Q)}\right)^{\frac{1}{p-1}}\sum_{Q'\in E_k,Q'\subseteq Q_j}(w_{Q'})^{1-\frac{1}{p-1}}|Q'|\\
&\lesssim& \left(\frac{2^{k}[w,\si]_{A_p}}{\psi(\si_Q)}\right)^{\frac{1}{p-1}}(w_{Q_j})^{1-\frac{1}{p-1}}|Q_j|\\
&\lesssim& \si_{Q_j}(w_{Q_j})^{\frac{1}{p-1}}(w_{Q_j})^{1-\frac{1}{p-1}}|Q_j|\lesssim \si_Q^{1/2}w(Q_j).
\end{eqnarray*}
Hence, since $\{Q_j\}$ are pairwise disjoint,
$$
\sum_{Q'\in E_k}\si_{Q'}w(Q')=\sum_j\sum_{Q'\in E_k,Q'\subseteq Q_j}\si_{Q'}w(Q')
\lesssim \si_Q^{1/2}w(Q).
$$
From this, and using also that, by our assumption, $\log\big(\psi(\si_Q)\big)\lesssim \si_Q^{1/2}$, we obtain
$$\sum_{k=1}^N\sum_{Q'\in E_k}\si_{Q'}w(Q')\lesssim \log\big(\psi(\si_Q)\big)\si_Q^{1/2}w(Q)\lesssim \si_Qw(Q).$$

Collecting the above estimates yields
$$\sum_{Q'\in {\mathcal S}_2, Q'\subseteq Q\atop \si_{Q'}\le \si_Q^{1/2}}\si_{Q'}w(Q')\lesssim
\left(\frac{[w,\si]_{A_p}}{\psi(\si_Q)}\right)^{\frac{1}{p-1}}(w_{Q})^{1-\frac{1}{p-1}}|Q|+\si_Qw(Q).$$
Therefore, by Proposition \ref{psi},
\begin{eqnarray}
&&\sum_{Q\in {\mathcal S}_2, Q\subseteq R}\si_Q\Big(\frac{1}{w(Q)}\sum_{Q'\in {\mathcal S}_2, Q'\subseteq Q\atop \si_{Q'}\le \si_Q^{1/2}}\si_{Q'}w(Q')\Big)^{p-1}w(Q)\label{part}\\
&&\lesssim [w,\si]_{A_p}\sum_{Q\in {\mathcal S}_2, Q\subseteq R}\frac{1}{\psi(\si_Q)}\si(Q)+\sum_{Q\in {\mathcal S}_2, Q\subseteq R}\si_Q^{p}w(Q)\nonumber\\
&&\lesssim [w,\si]_{\nu_p}\sum_{Q\in {\mathcal S}_2, Q\subseteq R}\frac{1}{\psi(\si_Q)}\si(Q)\lesssim [w,\si]_{\nu_p}\si(R).\nonumber
\end{eqnarray}

Consider the second sum on the right-hand side of (\ref{nspl}). Since $\psi$ is increasing on $(1,\infty)$,
\begin{eqnarray*}
\sum_{{Q'\in {\mathcal S}_2, Q'\subseteq Q}\atop \si_{Q'}>\si_Q^{1/2}}\si_{Q'}w(Q')
&\le& \psi(\si_Q^{1/2})^{-\frac{1}{p-1}}
\sum_{{Q'\in {\mathcal S}_2, Q'\subseteq Q}\atop \si_{Q'}>\si_Q^{1/2}}\si_{Q'}w(Q')\psi(\si_{Q'})^{\frac{1}{p-1}}\\
&\le& [w,\si]_{\nu_p}^{\frac{1}{p-1}}\psi(\si_Q^{1/2})^{-\frac{1}{p-1}}\sum_{{Q'\in {\mathcal S}_2, Q'\subseteq Q}}(w_{Q'})^{1-\frac{1}{p-1}}|Q'|\\
&\lesssim& [w,\si]_{\nu_p}^{\frac{1}{p-1}}\psi(\si_Q^{1/2})^{-\frac{1}{p-1}}(w_{Q})^{1-\frac{1}{p-1}}|Q|.\\
\end{eqnarray*}
Hence,
\begin{eqnarray*}
&&\sum_{Q\in {\mathcal S}_2, Q\subseteq R}\si_Q\Big(\frac{1}{w(Q)}\sum_{Q'\in {\mathcal S}_2, Q'\subseteq Q\atop \si_{Q'}>\si_Q^{1/2}}\si_{Q'}w(Q')\Big)^{p-1}w(Q)\\
&&\lesssim [w,\si]_{\nu_p}\sum_{Q\in {\mathcal S}_2, Q\subseteq R}\frac{1}{\psi(\si_Q^{1/2})}\si(Q)\lesssim [w,\si]_{\nu_p}\si(R),\\
\end{eqnarray*}
where we have used again Proposition \ref{psi} and that $\int_1^{\infty}\frac{1}{\psi(t^{1/2})}\frac{dt}{t}<\infty.$
This, along with (\ref{part}), completes the proof of (\ref{rem}) in the case $p\ge 2$.

In the case $1<p<2$ the proof is similar. Consider the first sum on the right-hand side of (\ref{nspl}).
Define the sets $E_k$ exactly as in the previous case. By (\ref{hyt}),
\begin{eqnarray*}
\sum_{Q'\in E_0}\si_{Q'}w(Q')
&\le&\frac{[w,\si]_{A_p}}{\psi(\si_Q)}\sum_{{Q'\in {\mathcal S}_2, Q'\subseteq Q}}\si_{Q'}^{2-p}|Q'|\\
&\lesssim & \frac{[w,\si]_{A_p}}{\psi(\si_Q)}\si_{Q}^{2-p}|Q|.
\end{eqnarray*}
Fix $1\le k\le N$. Let $\{Q_j\}$ be the maximal cubes of $E_k$.
Then
\begin{eqnarray*}
&&\sum_{Q'\in E_k,Q'\subseteq Q_j}\si_{Q'}w(Q')\le\frac{2^{k}[w,\si]_{A_p}}{\psi(\si_Q)}\sum_{Q'\in E_k,Q'\subseteq Q_j}\si_{Q'}^{2-p}|Q'|\\
&&\lesssim \frac{2^{k}[w,\si]_{A_p}}{\psi(\si_Q)}\si_{Q_j}^{2-p}|Q_j|\lesssim w_{Q_j}\si_{Q_j}^{p-1}\si_{Q_j}^{2-p}|Q_j|\lesssim \si_Q^{1/2}w(Q_j).
\end{eqnarray*}
Therefore,
$$
\sum_{Q'\in E_k}\si_{Q'}w(Q')=\sum_{j}\sum_{Q'\in E_k,Q'\subseteq Q_j}\si_{Q'}w(Q')\lesssim \si_Q^{1/2}w(Q),
$$
which implies
\begin{eqnarray*}
\sum_{{Q'\in {\mathcal S}_2, Q'\subseteq Q}\atop \si_{Q'}\le \si_Q^{1/2}}\si_{Q'}w(Q')&\lesssim& \frac{[w,\si]_{A_p}}{\psi(\si_Q)}\si_{Q}^{2-p}|Q|+\log\big(\psi(\si_Q)\big)\si_Q^{1/2}w(Q)\\
&\lesssim& \frac{[w,\si]_{A_p}}{\psi(\si_Q)}\si_{Q}^{2-p}|Q|+\si_Qw(Q).
\end{eqnarray*}
Hence, by Proposition \ref{psi},
\begin{eqnarray}
&&\sum_{Q\in {\mathcal S}_2, Q\subseteq R}\si_Q\Big(\frac{1}{w(Q)}\sum_{Q'\in {\mathcal S}_2, Q'\subseteq Q\atop \si_{Q'}\le \si_Q^{1/2}}\si_{Q'}w(Q')\Big)^{p-1}w(Q)\label{part2}\\
&&\lesssim [w,\si]_{A_p}^{p-1}\sum_{Q\in {\mathcal S}_2, Q\subseteq R}\frac{1}{\psi^{p-1}(\si_Q)}\big((\si_Q)^{p-1}w_Q\big)^{2-p}\si(Q)+\sum_{Q\in {\mathcal S}_2, Q\subseteq R}\si_Q^{p}w(Q)\nonumber\\
&&\lesssim [w,\si]_{\nu_p}\sum_{Q\in {\mathcal S}_2, Q\subseteq R}\frac{1}{\psi(\si_Q)}\si(Q)\lesssim [w,\si]_{\nu_p}\si(R).\nonumber
\end{eqnarray}

Further, arguing as above,
\begin{eqnarray*}
\sum_{{Q'\in {\mathcal S}_2, Q'\subseteq Q}\atop \si_{Q'}>\si_Q^{1/2}}\si_{Q'}w(Q')&\le& \psi(\si_Q^{1/2})^{-1}
\sum_{{Q'\in {\mathcal S}_2, Q'\subseteq Q}\atop \si_{Q'}>\si_Q^{1/2}}\si_{Q'}w(Q')\psi(\si_{Q'})\\
&\lesssim &\frac{[w,\si]_{\nu_p}}{\psi(\si_Q^{1/2})}\si_Q^{2-p}|Q|.
\end{eqnarray*}
Therefore,
\begin{eqnarray*}
&&\sum_{Q\in {\mathcal S}_2, Q\subseteq R}\si_Q\Big(\frac{1}{w(Q)}\sum_{Q'\in {\mathcal S}_2, Q'\subseteq Q\atop \si_{Q'}>\si_Q^{1/2}}\si_{Q'}w(Q')\Big)^{p-1}w(Q)\\
&&\lesssim [w,\si]_{\nu_p}^{p-1}\sum_{Q\in {\mathcal S}_2, Q\subseteq R}\frac{1}{\psi^{p-1}(\si_Q^{1/2})}\big((\si_Q)^{p-1}w_Q\big)^{2-p}\si(Q)\\
&&\lesssim [w,\si]_{\nu_p}\sum_{Q\in {\mathcal S}_2, Q\subseteq R}\frac{1}{\psi(\si_Q^{1/2})}\si(Q)\lesssim [w,\si]_{\nu_p}\si(R),
\end{eqnarray*}
which, along with (\ref{part2}), proves (\ref{rem}) in the case $1<p<2$. This completes the proof.
\end{proof}

\section{A brief survey of different bump conditions} The approach described in Proposition \ref{mp} is the key to different bump conditions, and we overview them briefly.
\subsection{Orlicz bumps}
Recall that for a Young function $A$, the normalized Luxemburg norm is defined by
$$
\|f\|_{A,Q}=\inf\Big\{\la>0:\frac{1}{|Q|}\int_QA(|f(y)|/\la)dy\le 1\Big\}.
$$
Define the maximal operator $M_A$ by
$$M_Af(x)=\sup_{Q\ni x}\|f\|_{A,Q}.$$

We say that a Young function $A$ satisfies the $B_p$ condition if $\int_1^{\infty}\frac{A(t)}{t^p}\frac{dt}{t}<\infty$.
Assume that $A\in B_p$, and set $\la_Q(\si)=\frac{\si_Q}{\|\si^{1/p}\|_{A,Q}^p}$. Then
\begin{eqnarray*}
&&\sum_{Q\in {\mathcal S},Q\subseteq R}\la_Q(\si)^{-1}\si(Q)=\sum_{Q\in {\mathcal S},Q\subseteq R}\|\si^{1/p}\|_{A,Q}^p|Q|\\
&&\lesssim \sum_{Q\in {\mathcal S},Q\subseteq R}\int_{E_Q}(M_A(\si^{1/p}\chi_R))^p \lesssim \int_{R}(M_A(\si^{1/p}\chi_R))^p \lesssim \si(R),
\end{eqnarray*}
where we have used that $M_A$ is bounded on $L^p$ for $A\in B_p$ \cite{P}.
Hence, by Proposition \ref{mp},
\begin{equation}\label{kli}
[w,\si]_p\lesssim \sup_Q(w_Q)^{1/p}\frac{\si_Q}{\|\si^{1/p}\|_{A,Q}}\f^{1/p'}\left(\frac{\si_Q}{\|\si^{1/p}\|_{A,Q}^p}\right).
\end{equation}
This result was obtained by K. Li \cite{Li}.

Let $\bar A$ denote the Young function complementary to $A$.
By generalized H\"older's inequality,
$$
\si_Q\le 2\|\si^{1/p}\|_{A,Q}\|\si^{1/p'}\|_{\bar A, Q}.
$$
From this and from (\ref{kli}),
\begin{equation}\label{la}
[w,\si]_p\lesssim \sup_Q(w_Q)^{1/p}\|\si^{1/p'}\|_{\bar A, Q}\f^{1/p'}\left(\frac{\|\si^{1/p'}\|_{\bar A,Q}^p}{(\si_Q)^{p-1}}\right).
\end{equation}
This result was obtained by M. Lacey \cite{La}.

\subsection{Entropy bumps}
Assume that instead of (\ref{conds}) we have
$$
\sum_{Q\in {\mathcal F}_k, Q\subseteq R}\si(Q)\lesssim 2^k\si(R),
$$
where the sets ${\mathcal F}_k$ are defined by (\ref{fk}). In this case, setting
$$M=\sup_Qw_Q(\si_Q)^{p-1}\la_Q(\si)\f^{p}(\la_Q(\si)),$$
instead of (\ref{subs}) we obtain
$$M^{1/p}\sum_{k=0}^{\infty}\frac{1}{\f(2^k)}\Big(\sum_{Q\in {\mathcal F}_k,Q\subseteq R}\la_Q(\si)^{-1}\si(Q)\Big)^{1/p}\lesssim M^{1/p}\si(R)^{1/p},$$
which implies
\begin{equation}\label{entb}
[w,\si]_p\lesssim \sup_Q(w_Q)^{1/p}(\si_Q)^{1/p'}\la_Q(\si)^{1/p}\f(\la_Q(\si)).
\end{equation}

Denote $\la_Q(\si)=\frac{\int_QM(\si\chi_Q)}{\si(Q)}$, and let us consider the sets ${\mathcal F}_k$ defined by (\ref{fk}).
Let $\{Q_j\}$ be the maximal cubes of $\{Q\in {\mathcal F}_k, Q\subseteq R\}$. Then
\begin{eqnarray*}
\sum_{Q\in {\mathcal F}_k, Q\subseteq R}\si(Q)=\sum_j\sum_{Q\in {\mathcal F}_k, Q\subseteq Q_j}\si(Q)\lesssim \sum_j\int_{Q_j}M(\si\chi_{Q_j})\lesssim
2^k\sum_j\si(Q_j)\lesssim 2^k\si(R).
\end{eqnarray*}

Therefore, by (\ref{entb}),
\begin{equation}\label{entropy}
[w,\si]_p\lesssim \sup_Q(w_Q)^{1/p}(\si_Q)^{1/p'}\left(\frac{\int_QM(\si\chi_Q)}{\si(Q)}\right)^{1/p}\f\left(\frac{\int_QM(\si\chi_Q)}{\si(Q)}\right),
\end{equation}
where $\int_1^{\infty}\frac{1}{\f(t)}\frac{dt}{t}<\infty$.
In the case $p=2$ this result was obtained by S. Treil and A.~Volberg \cite{TV} (who gave the name ``entropy bumps" to the bumps appearing in this expression),
and it was extended to any $p>1$ by M. Lacey and S. Spencer \cite{LS} (see also \cite{RS}).

\section{Remarks and complements}
\subsection{Comparison between different bump conditions}
Although we do not give concrete examples, it is not difficult to see that among bump conditions mentioned
above, there is no universally better condition.

For example, the entropy bump condition appearing in (\ref{entropy}) requires that $\si$ belongs locally to $L\log L$, while in (\ref{goal}) and (\ref{kli}) only local integrability of $\si$ is assumed.

The difference between (\ref{goal}) and (\ref{kli}) is expressed in the difference between
$$\la_Q(\si)=\psi(\si_Q)\quad\text{and}\quad \la_Q(\si)=\frac{\si_Q}{\|\si^{1/p}\|_{A,Q}^p}.$$
On the one hand, by homogeneity, $\psi(\si_Q)$ can not be estimated by $\frac{\si_Q}{\|\si^{1/p}\|_{A,Q}^p}$. On the other hand, for $A\in B_p$, it is easy to find a sequence $\si_j$ such that
$(\si_j)_Q=1$ and $\|\si_j^{1/p}\|_{A,Q}\to 0$ as $j\to \infty$ (it suffices to consider $\si=\frac{|Q|}{|E|}\chi_E$ for $E\subset Q$), and therefore, $\frac{\si_Q}{\|\si^{1/p}\|_{A,Q}^p}$ can not be estimated by
$\psi(\si_Q)$.

Concerning practical applications, the condition in Theorem \ref{mr} is the simplest as it basically requires only the computation of $w_Q$ and $\si_Q$. To check (\ref{kli}), one should estimate $\|\si^{1/p}\|_{A,Q}$
from below, which is a more difficult task.

\subsection{A new two-weight bound for the maximal operator}
Let $M$ denote the Hardy-Littlewood maximal operator. Using Sawyer's two-weight characterization for $M$ \cite{S}, it is easy to show that
$$\|M(\cdot\si)\|_{L^p(\si)\to L^p(w)}\lesssim \left(\sup_R\frac{1}{\si(R)}
\sum_{Q\in {\mathcal S}, Q\subseteq R}\si_Q^pw(Q)\right)^{1/p}.
$$

Let $\la_Q(\si)$ satisfy (\ref{conds}). Then
\begin{equation}\label{th}
\sum_{Q\in {\mathcal S}, Q\subseteq R}\si_Q^pw(Q)\lesssim \big(\sup_{Q}w_Q(\si_Q)^{p-1}\la_Q(\si)\big)\si(R).
\end{equation}
Therefore,
$$
\|M(\cdot\si)\|_{L^p(\si)\to L^p(w)}\lesssim \sup_{Q}w_Q^{1/p}\si_Q^{1/p'}\la_Q(\si)^{1/p}.
$$
Combining this with Proposition \ref{psi} yields
\begin{equation}\label{nb}
\|M(\cdot\si)\|_{L^p(\si)\to L^p(w)}\lesssim \sup_{Q}w_Q^{1/p}\si_Q^{1/p'}\psi(\si_Q)^{1/p}.
\end{equation}
This bound seems to be new.

\subsection{On the separated bump conjecture}
The separated bump conjecture (probably first formulated in \cite{CRV} in a slightly different form) asserts that if $A\in B_p$, then
\begin{equation}\label{sepcon}
[w,\si]_p\lesssim \sup_Q(w_Q)^{1/p}\|\si^{1/p'}\|_{\bar A,Q}.
\end{equation}
This conjecture is still open. In the particular case when $A(t)=\frac{t^p}{\log^{1+\e}(e+t)}$ it was confirmed in \cite{CRV,HP,La};
in general, (\ref{kli}) and (\ref{la}) represent the currently best known bounds towards this conjecture.

Informally speaking, the idea behind the separated bump conjecture is that a ``good" upper bound for $\|M(\cdot\si)\|_{L^p(\si)\to L^p(w)}$ should also be an upper bound for $[w,\si]_p$. Having this point of view in mind and taking into
account (\ref{nb}), one can also conjecture that
\begin{equation}\label{nc}
[w,\si]_p\lesssim \sup_{Q}w_Q^{1/p}\si_Q^{1/p'}\psi(\si_Q)^{1/p},
\end{equation}
where $\psi$ satisfies the assumptions of Proposition \ref{psi}. Theorem \ref{mr} shows that this conjecture holds on the set $\{Q:\si_Q>1\}$.

One can also ask a weaker question whether the finiteness of the right-hand side of (\ref{sepcon}) or (\ref{nc}) implies that $[w,\si]_p<\infty$.
At this point, we make an elementary observation that if $E=\{Q:w_Q(\si_Q)^{p-1}\ge 1\}$, then, by(\ref{hyt}),
\begin{eqnarray*}
\int_R\Big(\sum_{Q\in {\mathcal S}\cap E, Q\subseteq R}\si_Q\chi_Q\Big)^pw&\lesssim& (\sup_{Q\in{\mathcal S}\cap E}w_Q\si_Q^{p-1})\sum_{Q\in {\mathcal S}\cap E,Q\subseteq R}\si(Q)\\
&\lesssim& [w,\si]_{A_p}\sum_{Q\in {\mathcal S},Q\subseteq R}\si_Q^pw(Q).
\end{eqnarray*}
This along with (\ref{th}) shows that in order to get a counterexample (if exists) to such a weaker question, the principal role should be played by cubes $Q$ with $w_Q(\si_Q)^{p-1}<1$.


\begin{thebibliography}{99}
\bibitem{COV}
C. Cascante, J.M. Ortega and I.E. Verbitsky, {\it Nonlinear potentials and two weight trace inequalities for general dyadic and radial kernels}, Indiana Univ. Math. J. {\bf 53} (2004), no. 3, 845--882.


\bibitem{CRV}
D. Cruz-Uribe, A. Reznikov  and A. Volberg, {\it
Logarithmic bump conditions and the two-weight boundedness of Calder\'on-Zygmund operators},
Adv. Math. {\bf 255} (2014), 706--729.


\bibitem{H}
T.P. Hyt\"onen, {\it The $A_2$ theorem: remarks and complements}, Harmonic analysis and partial differential equations, 91--106, Contemp. Math., 612, Amer. Math. Soc., Providence, RI, 2014.

\bibitem{HL}
T.P. Hyt\"onen and K. Li, {\it Weak and strong $A_p-A_{\infty}$ estimates for square functions and related operators}, Proc. Amer. Math. Soc.  {\bf 146} (2018), no. 6, 2497--2507.

\bibitem{HP}
T.P. Hyt\"onen and C. P\'erez, {\it The $L(\log L)^{\e}$ endpoint estimate for maximal singular integral operators}, J. Math. Anal. Appl. {\bf 428} (2015), no. 1, 605--626.

\bibitem{La}
M.T. Lacey, {\it On the separated bumps conjecture for Calder\'on-Zygmund operators}, Hokkaido Math. J. {\bf 45} (2016),  no. 2, 223--242.

\bibitem{LS}
M.T. Lacey and S. Spencer, {\it On entropy bumps for Calder\'on-Zygmund operators}, Concr. Oper.  {\bf 2} (2015), no. 1, 47--52.


\bibitem{LSU}
M.T. Lacey, E.T. Sawyer and I. Uriarte-Tuero,
{\it Two Weight Inequalities for Discrete Positive Operators}, preprint. Available at
https://arxiv.org/abs/0911.3437

\bibitem{L}
A.K. Lerner, {\it On an estimate of Calder\'on-Zygmund operators by dyadic positive operators}, J. Anal. Math.  {\bf 121} (2013), 141--161.


\bibitem{LO}
A.K. Lerner and S. Ombrosi, {\it Some remarks on the pointwise sparse domination}, J. Geom. Anal.,
{\bf 30} (2020), no. 1, 1011--1027.


\bibitem{Li}
K. Li, {\it Two weight inequalities for bilinear forms}, Collect. Math. {\bf 68} (2017), no. 1, 129--144.

\bibitem{M}
B. Muckenhoupt, {\it Weighted norm inequalities for the Hardy maximal function}, Trans. Amer. Math. Soc.  {\bf 165} (1972), 207--226.

\bibitem{NRTV}
F. Nazarov, A. Reznikov, S. Treil and A. Volberg, {\it A Bellman function proof of the $L^2$ bump conjecture}, J. Anal. Math. {\bf 121} (2013), 255--277.

\bibitem{P}
C. P\'erez, {\it On sufficient conditions for the boundedness of the
Hardy-Littlewood maximal operator between weighted $L\sp p$-spaces
with different weights}, Proc. London Math. Soc., {\bf 71} (1995),
no. 1, 135--157.

\bibitem{RS}
R. Rahm and S. Spencer, {\it Entropy bumps and another sufficient condition for the two-weight boundedness of sparse operators}, Israel J. Math. {\bf 223} (2018), no. 1, 197--204.

\bibitem{S}
E.T. Sawyer, {\it A characterization of a two-weight norm inequality for maximal operators}, Studia Math. {\bf 75} (1982), no. 1, 1--11.

\bibitem{T}
S. Treil, {\it A remark on two weight estimates for positive dyadic operators},
Operator-related function theory and time-frequency analysis,  185--195, Abel Symp., 9, Springer, Cham, 2015.

\bibitem{TV}
S. Treil and A. Volberg,
{\it Entropy conditions in two weight inequalities for singular integral operators}, Adv. Math., {\bf 301} (2016), 499-–548.


\end{thebibliography}
\end{document}